\documentclass[11pt]{article}
\usepackage{amsfonts,amsmath}
\usepackage{latexsym}
\usepackage{amsthm}
\usepackage{amscd}
\usepackage{commath}
\usepackage{epsfig}
\usepackage{amssymb}
\usepackage{caption}
\usepackage{enumitem}
\usepackage{mathtools}
\usepackage{booktabs}
\usepackage[table]{xcolor}
\usepackage[toc,page]{appendix}
\usepackage{array}
\newcolumntype{P}[1]{>{\centering\arraybackslash}p{#1}}

\usepackage[english]{babel}

\usepackage[letterpaper,top=2cm,bottom=2cm,left=2.5cm,right=2.5cm,marginparwidth=1.5cm]{geometry}

\usepackage[colorlinks=true, allcolors=blue,bookmarksnumbered]{hyperref}
\hypersetup{
	citecolor = red!80!black
}
\usepackage{color,soul}
\setstcolor{red}
\usepackage{float,subcaption}
\usepackage{graphicx}
\usepackage{stackengine}
\usepackage{tikz}
\usetikzlibrary{intersections}
\usepackage[export]{adjustbox}
\usepackage{array}

\usepackage[capitalise]{cleveref}
\usepackage{microtype}
\usepackage[percent]{overpic}

\usepackage{lineno}
\newtheorem{defn}{Definition}[section]

\newtheorem{theorem}[defn]{Theorem}
\newtheorem{prop}[defn]{Proposition}

\newtheorem{cor}[defn]{Corollary}
\theoremstyle{remark}
\newtheorem{remark}[defn]{Remark}

\numberwithin{equation}{section}
\numberwithin{figure}{section}

\newcommand{\bb}{\begin{equation}}
\newcommand{\ee}{\end{equation}}

\newcommand\HH{\mathbb{H}}

\newcommand{\Hd}{\mathbb H^d}

\newcommand{\Prob}{\mathbb P}

\newcommand{\dist}{\operatorname{dist}}
\newcommand{\dd}{\mathrm d}
\newcommand{\cH}{\mathcal H}
\newcommand{\1}{\mathbf 1}

\DeclareMathOperator{\Jac}{Jac}

\newcommand{\MD}[1]{\textcolor{green!60!black} {[Matteo: #1]}}

 \newcommand{\IPVT}[1]{{\rm IPVT}\big(#1\big)}

 \newcommand{\esp}[1]{\mathbb E \left[ #1 \right]}

\newcommand{\origin}{\mathbf{o}}

\newcolumntype{?}{!{\vrule width 1.5pt }}
\newlength\savedwidth

\newcommand{\tightoverset}[2]{%
  \mathop{#2}\limits^{\vbox to -.5ex{\kern-0.75ex\hbox{$#1$}\vss}}}
  
\newcommand{\R}{\mathbb{R}}
\newcommand{\PP}{\mathbb{P}}
\newcommand{\EE}{\mathbb{E}}

\def\rlabel #1 #2{\begin{equation} \label{#1} #2 \end{equation}}

\def\rproof{\begin{proof}}

\def\Qed{\end{proof}}

\makeatletter
\let\@fnsymbol\@alph
\makeatother

\title{\textsc{Face volume densities of positive-intensity and ideal Poisson--Voronoi tessellations in hyperbolic spaces}}
\author{
Matteo \textsc{D'Achille}\thanks{Institut Élie Cartan de Lorraine, CNRS, Universit\'e de Lorraine,  F-57070, Metz, France.\newline $_{}$\hfill  \href{mailto:matteo.d-achille@univ-lorraine.fr}{\texttt{matteo.d-achille@univ-lorraine.fr}}}
\; \&\; Christoph \textsc{Thäle}\thanks{Fakultät für Mathematik, Ruhr-Universität Bochum, D-44801, Bochum, Germany.\newline $_{}$\hfill  \href{mailto:christoph.thaele@rub.de}{\texttt{christoph.thaele@rub.de}}}
}
\date{}

\begin{document}

\maketitle
\begin{abstract}
\noindent We determine analytically for all $k\in\{0,1,\ldots,d-1\}$ the $k$-volume densities of a Poisson--Voronoi tessellation of intensity $\lambda>0$ in the $d$-dimensional hyperbolic space of constant curvature $-1$. This largely extends previous results of Isokawa in dimensions two and three. As applications, we provide closed form expressions for all face volume densities and all typical face volumes of the ideal Poisson--Voronoi tessellation (IPVT), which is the low-intensity limit as $\lambda\downarrow0$ of the hyperbolic Poisson--Voronoi tessellation. As a main tool we develop a new Blaschke--Petkantschin--type formula in hyperbolic space. \\[0.5cm]
\noindent\textbf{Keywords:} Blaschke--Petkantschin formula; hyperbolic geometry; ideal Poisson--Voronoi tessellation (IPVT); $k$-faces; Poisson--Voronoi tessellation\\
\noindent\textbf{MSC (2020):} 52A55, 60D05, 60G55
\end{abstract}

%{\color{red}
%\begin{itemize}
%\item CT: I changed finite intensity to positive intensity, because this feels much more correct to me. \MD{Fine for me!}
%\item CT: I tried to used consistently the terms $k$-face counting density and $k$-face volume density. Or do you prefer the term intensity for the first quantity? MD: both choices are fine for me, as long as the the fact that these are ${\rm I}_{d,k}$ resp.~${\rm \tilde{I}}_{d,k}$ from $\cite{IPVT}$ is clear (which is the case :) ).
%\item CT: Percolation is not mentioned so far. \MD{Added a couple of sentences at page 3, what do you think?}
%\item MD: Finished Figure 1.1. What do you think? \CT{I like it, but 2 pictures would also be nice. I commented out the middle one. OK?} \MD{OK! I have thus adapted the caption.}
%\item MD: I put the names of major results in textsc. Also, I polished the definition of $A_{d,q}$, and changed o to $\origin$ (to be double-checked).
%\end{itemize}
%}

%\tableofcontents

\section{Introduction and main result}

\iffalse
\paragraph{Poisson--Voronoi percolation in $\HH_d$}

\cite{BS01} \cite{HM24}

\MD{Mention open problem for $\lim_{\lambda \downarrow 0} p_u(\lambda; \HH_3)$ and its connection with Gabioreau's fixed price conjecture. Here or later?}

\MD{Define $p_c(\lambda;\HH_d)$}
\fi

\paragraph{Poisson--Voronoi tessellations and Blaschke--Petkantschin formula in Euclidean spaces.} 

Poisson--Voronoi tessellations are among the most classical models of random
spatial subdivision.  Starting from a stationary Poisson point process in
Euclidean space, one assigns to each point the region consisting of all
locations for which this point is the nearest point of the process.  This
construction produces a random tessellation whose cells describe the domains
of influence of the points of the process.  Due to its simple definition and rich geometric structure, the Poisson--Voronoi tessellation
has become a standard model and has found numerous
applications, for instance in materials science, telecommunications,
geographical modelling and the analysis of spatial data.

Several fundamental mean values of the Euclidean Poisson--Voronoi tessellation
in \(\R^d\) are known analytically.  These include, in particular, the counting
densities of \(k\)-faces and the corresponding \(k\)-volume densities for all
\(k\in\{0,1,\ldots,d-1\}\), see
\cite{MollerPaper89,MollerBook94,SW08} and the references mentioned therein.  Such formulas are basic quantitative
descriptors of the tessellation. They determine how many faces of a given
dimension occur per unit volume and how much \(k\)-dimensional content these
faces carry on average.  They therefore provide a benchmark for simulations,
a point of comparison between different random tessellation models, and a
starting point for the study of more refined characteristics. 

A central role in the derivation of
such formulas is played by Blaschke--Petkantschin--type transformations involving spheres.  These
integral-geometric change-of-variables formulas allow one to separate
location, radius and directional information in configuration integrals, and
thereby turn geometric characteristics of the tessellation into tractable lower-dimensional
integrals.  Beyond Poisson--Voronoi
tessellations, Blaschke--Petkantschin formulas of this type have become versatile tools in
stochastic and computational geometry.  We refer to the survey
\cite{NikiteknoSurvey}, and mention selected applications
in \cite{ChenavierDevillers18,EdelsbrunnerEtAl17,EdelsbrunnerEtAl18,
EdelsbrunnerEtAl19,ReitznerStrotmann}.

\paragraph{Poisson--Voronoi tessellations and Blaschke--Petkantschin formula in hyperbolic spaces.} 
It is natural to ask to what extent the analytic theory for Poisson--Voronoi tessellations survives in
non-Euclidean geometries.  In the present paper we focus on hyperbolic spaces of constant negative curvature $-1$.
Hyperbolic Poisson--Voronoi tessellations retain the basic nearest-neighbour
interpretation of their Euclidean counterparts, but the underlying geometry becomes
substantially different.  In particular, the exponential growth of volume and the absence of a scaling as in Euclidean spaces significantly influence the
topology and geometry of the cells, and make explicit computations far more delicate.  

Compared with the Euclidean case, considerably fewer characteristics of
hyperbolic Poisson--Voronoi tessellations are known in closed form.  Detailed
studies in dimensions two and three were carried out in
\cite{Isokawa2d,Isokawa3d}, where mean values and structural
properties of the corresponding tessellations were investigated based on hyperbolic trigonometry, which is possible only in these low-dimensional cases.
In general
dimension, first explicit progress was made in \cite{BetaStar}.  The approach
there is based on the relation of hyperbolic Poisson--Voronoi tessellations to so-called beta-star polytopes and yields for any dimension $d\geq 2$ and any $k\in\{0,1,\ldots,d-1\}$ an explicit description of the expected number of $k$-faces of the typical cell. These numbers determine in turn the $k$-face counting densities of the
hyperbolic Poisson--Voronoi tessellation. However, these results only concern the combinatorial structure of the
tessellation.  They do not determine the metric size of its lower-dimensional
skeletons.  In particular, no formula for the \(k\)-volume density of the hyperbolic
Poisson--Voronoi tessellation in arbitrary dimension seems to have been available so far. 

The present paper studies the complementary metric quantity, namely the
\(k\)-volume density of the hyperbolic Poisson--Voronoi tessellation, in
arbitrary dimension.  Thus, instead of counting \(k\)-faces, we measure their
total \(k\)-dimensional volume per unit hyperbolic volume.  This provides the
metric counterpart to the face-counting formulas obtained in \cite{BetaStar}.
It also substantially extends the explicit computations in low dimensions
from \cite{Isokawa2d,Isokawa3d} and complements the asymptotic local results
of \cite{CalkaChapronEnriquez} for the face-counting densities of Poisson--Voronoi tessellations on general
Riemannian manifolds. The relevance of such metric
characteristics is already visible in dimension two: the low-intensity limit of
the length density of the one-dimensional skeleton, equivalently the
\(k\)-volume density for \(k=1\), plays a central role in the upper bound for
the Cheeger constant of high genus hyperbolic surfaces obtained in
\cite{BCP25}. This connection will be discussed further below in relation to
the ideal Poisson--Voronoi tessellation.

A key ingredient in our approach is a new Blaschke--Petkantschin--type formula in
hyperbolic space.  It decomposes configuration integrals according to the
geodesic subspace, the centre and the radius of the circumsphere determined by
a collection of points. A boundary case of our formula, corresponding to \(d+1\) points with a unique hyperbolic circumcentre, was previously obtained in \cite[Proposition 2.1.1]{ChapronDiss2018}. The form needed here extends this decomposition to lower-dimensional circumspheres and is therefore suited to the analysis of \(k\)-faces for every $k\in\{0,1,\ldots,d-1\}$.
Since Euclidean Blaschke--Petkantschin formulas of this type have
proved to be a versatile tool in the applications discussed above, it is
natural to expect that a hyperbolic counterpart will be useful beyond the
specific computation carried out here.  We therefore regard this formula as an
object of independent interest. We moreover mention that the ratio of face-counting densities and face-volume
densities available from \cite{Isokawa2d} has been used in~\cite{HM24} to
compute the right derivative at \(\lambda=0\) of the critical probability
\(p_c(\lambda)\) for Poisson--Voronoi percolation on the hyperbolic plane,
where the percolation model is defined by colouring independently the cells of a Poisson--Voronoi
tessellation with intensity \(\lambda\). We leave the investigation of the
implications of the present work for hyperbolic Poisson--Voronoi percolation in dimensions
\(d\geq3\) to future work.

%\MD{Counting faces on the sphere \cite{KabCTVoronoiSphere}}

\begin{figure}
\centering
\includegraphics[width=.47\linewidth]{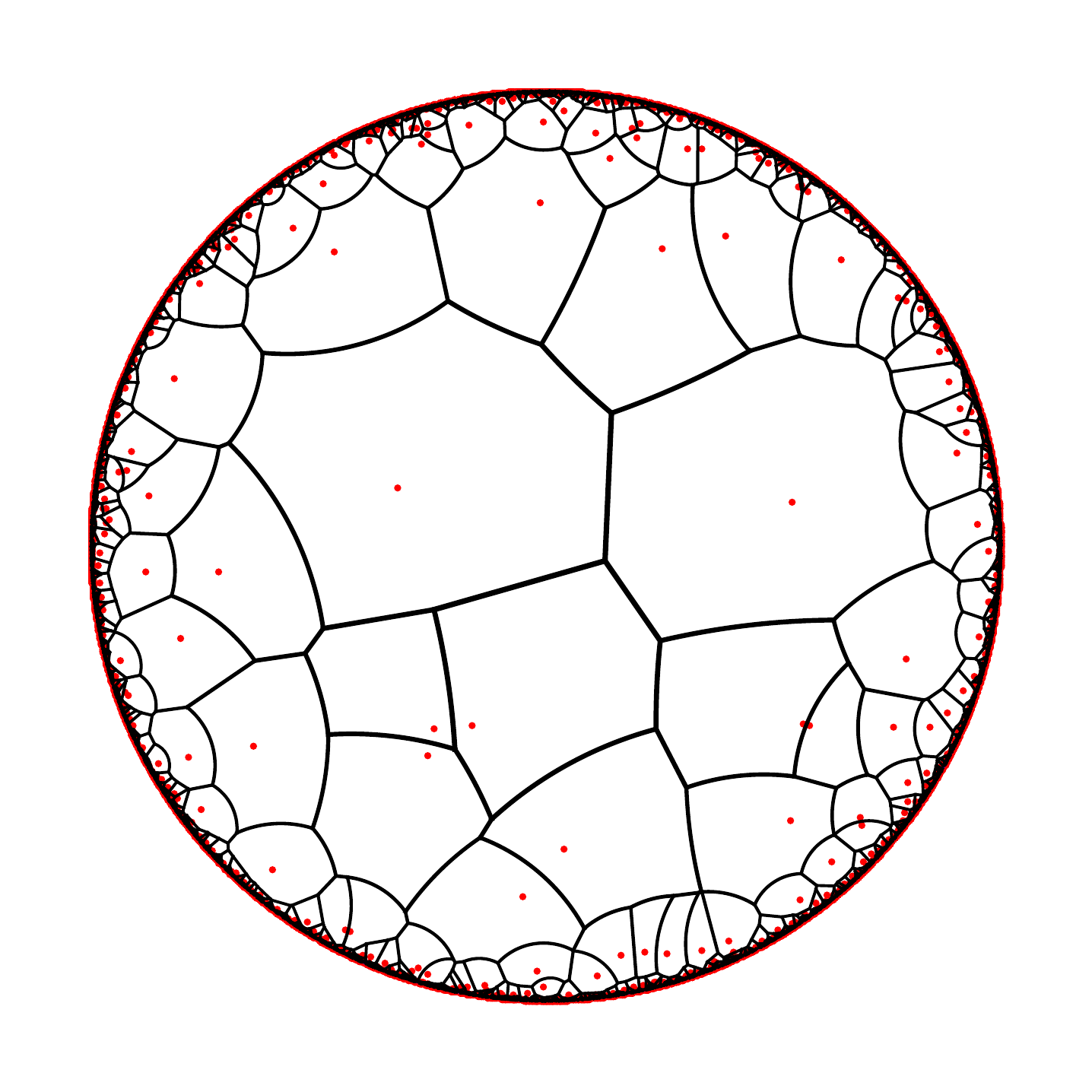} \hspace{5pt}
\includegraphics[width=.47\linewidth]{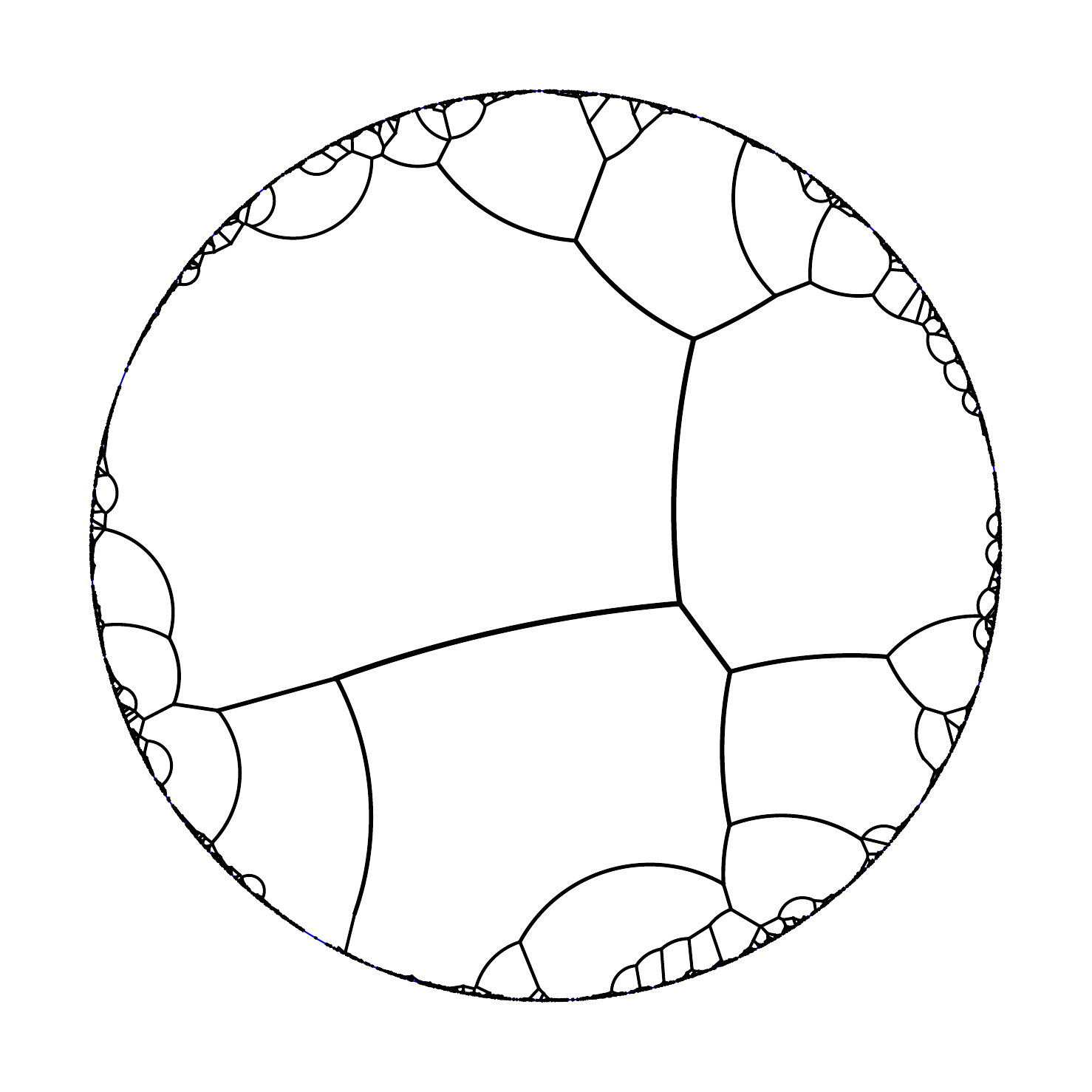} 
\caption{Left: sample of the Poisson--Voronoi tessellation of $\HH^2$ in the conformal disk model at positive intensity (nuclei in red). Right: sample of ${\rm IPVT}(\HH^2)$ coupled via dilation from $\origin$. In both figures, only the first 10000 nuclei have been used (and displayed in the left one).}
\label{fig:pvtipvt}
\end{figure}

\paragraph{Ideal Poisson--Voronoi tessellations.}
The explicit formula for the \(k\)-volume density of a hyperbolic Poisson--Voronoi tessellation we prove makes it possible to
study the low-intensity regime.  This connects the present work to the ideal
Poisson--Voronoi tessellation (IPVT), a recently introduced limiting object obtained
from hyperbolic Poisson--Voronoi tessellations when the number of nuclei per
unit volume tends to zero. The ideal Poisson--Voronoi tessellation is a remarkable new scaling limit
whose cells are unbounded hyperbolic polytopes with a unique ideal point at
infinity. We denote this random tessellation by
\(\IPVT{\HH^d}\), following \cite{IPVT}.

The motivation for ideal Poisson--Voronoi tessellations comes from several
directions.  They were introduced in part in connection with the study of
Cheeger's constant of high genus closed hyperbolic surfaces
\cite{BCP25}, as alredy indicated above, and are also related to questions on invariant random
tessellations and non-amenable geometry, see, for instance,
\cite{bhupatiraju}.  Since their introduction, IPVTs and the geometry of
their cells have found a number of striking applications.  These include connections
with Gaboriau's fixed price conjecture \cite{MiMe23} and estimates for
uniqueness thresholds in Poisson and Bernoulli--Voronoi percolation on product
spaces with non-amenable structure \cite{GR25,AGKRW25}.

For real hyperbolic space \(\HH^d\), \(d\ge2\), the tessellation
\(\IPVT{\HH^d}\) is an isometry-invariant random subdivision of \(\HH^d\)
into countably many unbounded hyperbolic polytopes, each having a unique end, see
Figure~\ref{fig:pvtipvt}, right, for a sample in the case $d=2$.  In the upper half-space model, the zero cell,
that is, the cell containing an arbitrary but fixed point, admits a particularly transparent
description: its law is that of the complement of a union of Euclidean
half-balls whose centres and radii are governed by an explicit Poisson point
process.  This Poissonian representation is one of the central features of
the model, and remains available in related non-Euclidean settings beyond
\(\HH^d\), see \cite{IPVTH2H2}.

As noted above, the face-counting densities of $\IPVT{\HH^d}$ were obtained explicitly in \cite{IPVT}, using the connection with the results of \cite{BetaStar}.
Combining these formulas with the low-intensity limit of the
$k$-volume densities obtained in the present paper yields corresponding
explicit formulas for the mean \(k\)-dimensional volume of the typical
\(k\)-face of the ideal tessellation.  Thus our results provide the missing
metric counterpart to the known face-counting formulas for IPVTs.

\medbreak

%\MD{From now on, statement of the result}

\paragraph{Formulation of the main result.}
Let \(\dist(\cdot,\cdot)\) denote the hyperbolic distance on the $d$-dimensional hyperbollic space \(\Hd\) of constant curvature $-1$, and let \(\cH^m\)
denote \(m\)-dimensional Hausdorff measure, \(m\in\{0,1,\ldots,d\}\), with
respect to this distance.  In particular, \(\cH^d\) is the hyperbolic volume
measure on \(\Hd\).  Fix \(\lambda>0\), and let \(\eta_\lambda\) be a Poisson
point process on \(\Hd\) with intensity measure \(\lambda\cH^d\).  The
Voronoi cell with nucleus \(x\in\eta_\lambda\) is defined by
\[
C(x,\eta_\lambda)
=
\{y\in\Hd: \dist(y,x)\leq \dist(y,z)\text{ for all }z\in\eta_\lambda\}.
\]
The collection of all such cells is the hyperbolic Poisson--Voronoi tessellation with
positive intensity \(\lambda\), see Figure~\ref{fig:pvtipvt}, left, for a sample in the case $d=2$. To simplify comparison with the existing literature we remark that in \cite{IPVT} the intensity was chosen to be $\lambda^{d-1}$. For
\(k\in\{0,1,\ldots,d-1\}\), let \(X_k(\eta_\lambda)\) denote the set of all
\(k\)-faces of this tessellation, that is,
\[
X_k(\eta_\lambda)
=
\bigcup_{x\in\eta_\lambda}
\mathcal F_k\bigl(C(x,\eta_\lambda)\bigr),
\]
where \(\mathcal F_k(P)\) denotes the set of all \(k\)-dimensional faces of
a polytope \(P\).  Fix a bounded Borel set \(W\subset\Hd\) with
\(0<\cH^d(W)<\infty\), and define the \(k\)-face volume density by
\[
D_{d,k}(\lambda)
=
\frac{1}{\cH^d(W)}
\esp{\sum_{F\in X_k(\eta_\lambda)}\cH^k(F\cap W)}.
\]
The expectation on the left-hand side defines an isometry-invariant Radon
measure on \(\Hd\), and hence is a constant multiple of the hyperbolic volume
measure.  Thus \(D_{d,k}(\lambda)\) is independent of the choice of \(W\).
Our first goal is to compute this density explicitly for every positive intensity \(\lambda\).

Throughout this paper we put \(q= d-k\), write
\[
\omega_k={2\pi^{k/2}\over\Gamma(k/2)}
\]
for the area of the \((k-1)\)-dimensional Euclidean unit sphere, and define 
\begin{equation}\label{eq:Adq}
A_{d,q}
\coloneqq
{\omega_{d(q+1)-q}\cdot \omega_{d+1}^q\cdot\omega_{d-q+1}
\over
\omega_{d(q+1)-q+1}}.
\end{equation}
The constant \(A_{d,q}\) has a probabilistic interpretation: $\tfrac{1}{q!\omega_d^{q+1}}A_{d,q}$ is the expected
\(q\)-dimensional Euclidean volume of the random simplex generated by
\(q+1\) independent points sampled uniformly from the Euclidean
\((d-1)\)-dimensional unit sphere, see \cite[Chapter 4]{RandomSimplices}.  Finally, we denote by
\[
b_d(r)=\omega_d\int_0^r\sinh^{d-1}(s)\,\dd s,
\qquad r>0,
\]
the {hyperbolic} volume of a hyperbolic ball of radius \(r\).

We are now prepared to present the main result of this paper. It gives a complete analytic formula for all \(k\)-face volume densities, {$k \in \{0,1,\ldots,d-1\}$, in every dimension $d \geq 2$ and at every positive intensity $\lambda$,} reducing the problem to a {simple} explicit {one-dimensional} radial integral.

\begin{theorem}[\textsc{Face volume densities for Poisson--Voronoi tessellations in \(\Hd\)}]
\label{thm:kVolDensity}
For \(d\geq 2\) and \(k\in\{0,1,\ldots,d-1\}\), we have, with \(q=d-k\),
\[
D_{d,k}(\lambda)
=
{\lambda^{q+1}\over(q+1)!}\,A_{d,q}
\int_0^\infty
\exp\{-\lambda b_d(r)\}
\sinh^{(d-1)(q+1)}(r)\,\dd r .
\]
\end{theorem}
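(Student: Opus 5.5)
We represent the $k$-skeleton as a union of pieces of bisector intersections and apply the multivariate Mecke formula. Almost surely, every $k$-face is contained in the set of points $y$ that are equidistant from exactly $q+1$ nuclei $x_0,\dots,x_q\in\eta_\lambda$, where $q=d-k$, and strictly closer to these than to all other nuclei. This gives
\[
\sum_{F\in X_k(\eta_\lambda)}\cH^k(F\cap W)=\frac1{(q+1)!}\sum_{(x_0,\dots,x_q)\in\eta^{q+1}_{\lambda,\neq}}\int_{E(x_0,\dots,x_q)\cap W}\1\{\eta_\lambda\cap B^\circ(y,\dist(y,x_0))=\emptyset\}\,\cH^k(\dd y).
\]
Here $E(x_0,\dots,x_q)=\{y:\dist(y,x_0)=\dots=\dist(y,x_q)\}$ is a totally geodesic $k$-dimensional subspace for points in general position. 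By Mecke's formula the expectation equals
\[
\frac{\lambda^{q+1}}{(q+1)!}\int_{(\Hd)^{q+1}}\int_{E(\mathbf x)\cap W}e^{-\lambda b_d(\dist(y,x_0))}\,\cH^k(\dd y)\,\cH^d(\dd x_0)\cdots\cH^d(\dd x_q).
\]

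The core is a Blaschke--Petkantschin-type change of variables. It replaces $(x_0,\dots,x_q)$ by three pieces of data:
\begin{itemize}
\item the $q$-dimensional totally geodesic subspace $L$ containing the points;
\item the circumcentre $z\in L$ and the circumradius $r$ of the points inside $L$;
\item the unit directions $u_0,\dots,u_q\in S^{q-1}$ from $z$.
\end{itemize}
The equidistant set $E(\mathbf x)$ is then the $k$-dimensional geodesic subspace through $z$ orthogonal to $L$. Its points $y$ are parametrised by $z$ together with a normal vector in the $k$-dimensional normal space. Equivalently, and more conveniently, parametrise the pair $(y,\mathbf x)$ directly by the following data:
\begin{itemize}
\item the point $y\in\Hd$;
\item the common radius $R=\dist(y,x_i)$;
\item unit directions $v_0,\dots,v_q\in S^{d-1}\subset T_y\Hd$, with $x_i=\exp_y(Rv_i)$.
\end{itemize}
The measure $\cH^k(\dd y)\prod\cH^d(\dd x_i)$ on the incidence set should be pushed to $\dd y\,\dd R\,\prod\sigma(\dd v_i)$ with a Jacobian of the form $\sinh^{(d-1)(q+1)}(R)$ times $q!\,\Delta_q(v_0,\dots,v_q)$, the Euclidean $q$-volume of the simplex spanned by the $v_i$. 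Heuristically, each $x_i$ contributes a spherical shell factor $\sinh^{d-1}(R)$. The remaining $q$ constraint directions (moving $y$ off $E$) combine with the $(q+1)$ radial freedoms into the one radius $R$. That combination is the source of the simplex-volume factor, exactly as in the Euclidean proof of \cite[Thm.~7.3.2]{SW08}.

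The main obstacle is proving this hyperbolic Jacobian rigorously. I will verify that no additional $\cosh$ factors appear. My route is to compute the differential of $(y,R,v_0,\dots,v_q)\mapsto(x_0,\dots,x_q)$ restricted to the incidence manifold, using Jacobi fields along the geodesics from $y$ to $x_i$. Variations of $v_i$ produce Jacobi fields of norm $\sinh R$, giving $\sinh^{d-1}(R)$ per point. The variation in $R$ and the variation of $y$ in the $q$ normal directions move each $x_i$ radially, by $\dd R-\langle v_i,\dd y\rangle$ to first order, since the derivative of the distance function is the unit radial vector. Variations of $y$ along $E$ merely change the base point. The radial block is therefore the $(q+1)\times(q+1)$ matrix with rows $(1,-\langle v_i,\cdot\rangle)$ restricted to the normal space. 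Its determinant is $q!$ times the $q$-volume of the simplex of the projected $v_i$. To handle the tangential components, I use a coarea argument, or the boundary case \cite[Prop.~2.1.1]{ChapronDiss2018}, as a consistency check.

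With this formula in hand, integrating $y$ over $W$ gives $\cH^d(W)$, which cancels the normalisation. Integrating the directions gives
\[
\int_{(S^{d-1})^{q+1}}q!\,\Delta_q\,\dd\sigma^{q+1}=A_{d,q},
\]
by the stated interpretation from \cite[Ch.~4]{RandomSimplices}. What remains is $\int_0^\infty e^{-\lambda b_d(R)}\sinh^{(d-1)(q+1)}(R)\,\dd R$, which yields the claimed formula.
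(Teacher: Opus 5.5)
Your proposal is correct and has the same overall architecture as the paper. It uses the pathwise representation of the $k$-skeleton via empty open balls and the multivariate Mecke formula. It then applies a hyperbolic Blaschke--Petkantschin formula whose density with respect to $\cH^d(\dd y)\,\dd R\,\prod_i\sigma_y(\dd v_i)$ is $\sinh^{(d-1)(q+1)}(R)\,q!\,\Delta_q(v_0,\dots,v_q)$; this is exactly the paper's intermediate identity before integrating over the spheres. Finally, the spherical simplex moment gives $A_{d,q}$.

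Where you differ is in the proof of the Blaschke--Petkantschin formula. The paper never differentiates $\mathbf x$ with respect to the base point. It realises the incidence set as the regular level set of $F_i=\cosh\dist(y,x_i)-\cosh\dist(y,x_0)$, slices it over $(\Hd)^{q+1}$ and over $\Hd$ by the coarea formula, and uses $J_{\pi_{\mathbb X}}/J_{\pi_{\Hd}}=J_yF/J_{\mathbf x}F$. This needs only two ingredients:
\begin{itemize}
\item gradients of distance functions, giving $J_{\mathbf x}F=\sqrt{q+1}\sinh^q r$ and $J_yF=\sinh^q(r)\,q!\,\Delta_q$;
\item polar coordinates around a fixed $y$, with Jacobian $\sqrt{q+1}\sinh^{(d-1)(q+1)}r$.
\end{itemize}
The factors $\sqrt{q+1}$ then cancel. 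Your route differentiates the full parametrisation $(y,R,v_0,\dots,v_q)\mapsto(y,\mathbf x)$. This makes the origin of the simplex volume transparent, but it forces you to handle Jacobi fields with a moving base point, which the paper's slicing avoids.

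Two statements in your sketch need to be corrected for it to be a proof.

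First, moving $y$ does not move $x_i$ purely radially. Write $\gamma_i(t)=\exp_y(tv_i)$, let $P_i$ be parallel transport along $\gamma_i$, and transport $v_i$ parallel. Then $\dot x_i=(\dot R+\langle v_i,\dot y\rangle)\dot\gamma_i(R)+\cosh R\,P_i(\dot y-\langle\dot y,v_i\rangle v_i)+\sinh R\,P_i\dot v_i$, so $\cosh R$ terms are present. They drop out of the determinant only because the $\dot v_i$-columns produce $\sinh R$ times the whole orthogonal complement of $\dot\gamma_i(R)$ in $T_{x_i}\Hd$. Column operations therefore make the matrix block triangular. This is the verification you announced, and it works.

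Second, replace the phrase that variations of $y$ along $E$ ``merely change the base point'' by the augmented linear map $\xi\mapsto(\dd\Phi\,\xi,\ \mathrm{proj}_{T_yE}\,\dot y)$. Its determinant in orthonormal frames is the density of $\cH^k_{E(\mathbf x)}(\dd y)\,\cH^{d(q+1)}(\dd\mathbf x)$ on the incidence set. After the column reduction, the remaining block is $\left(\begin{smallmatrix}B&C\\ I_k&0\end{smallmatrix}\right)$, where $C$ is your $(q+1)\times(q+1)$ radial block in the variables $(\dot y_N,\dot R)$. Hence the determinant is $\sinh^{(d-1)(q+1)}(R)\,|\det C|$, and $|\det C|=q!\,\Delta_q(v_0,\dots,v_q)$ because each $v_i-v_0$ lies in the normal space of $E$ at $y$.

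Also, in $\Hd$ the set $E(\mathbf x)$ is empty for a positive-measure set of tuples, since a circumcentre need not exist. This is harmless: such tuples contribute nothing and lie outside the image of your parametrisation.
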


\begin{remark}\label{rem:Laplace}
The integral appearing in Theorem~\ref{thm:kVolDensity} can be viewed as a
Laplace transform.  Define
\[
\psi(s)=\sinh^{d-1}(b_d^{-1}(s)),
\qquad s\geq0 .
\]
Then the change of variables \(s=b_d(r)\) gives
\[
\int_0^\infty
\exp\{-\lambda b_d(r)\}
\sinh^{(d-1)(q+1)}(r)\,\dd r
=
\omega_d^{-1}\,\mathfrak{L}[\psi^q](\lambda),
\]
where \(\mathfrak{L}[\psi^q]\) denotes the Laplace transform of the function
\(\psi^q\). This observation will be used in the proof of Corollary~\ref{cor:IPVTLimit}
below.

The integral also admits a probabilistic interpretation.  If \(\eta_\lambda\) is an isometry-invariant Poisson process in \(\HH^d\) with intensity \(\lambda\), then $\exp\{-\lambda b_d(r)\}=\PP\big(\eta_\lambda(B_\Hd(\origin,r))=0\big)$,
where \(\origin \in\HH^d\) is fixed.  Hence, if \(R_\lambda\) denotes the distance of
\(\origin \) to the nearest point of \(\eta_\lambda\), then
$\PP(R_\lambda>r)=\exp\{-\lambda b_d(r)\}$,
or equivalently \(b_d(R_\lambda)\) has the exponential distribution with
parameter \(\lambda\). Thus the above integral is the same as
\[
\frac{1}{\lambda\omega_d}\,
\EE\big[\sinh^{q(d-1)}(R_\lambda)\big].
\]
Consequently, Theorem~\ref{thm:kVolDensity} may also be written as
\[
D_{d,k}(\lambda)
=
\frac{A_{d,q}}{(q+1)!\,\omega_d}\,
\lambda^q\,
\EE\big[\sinh^{q(d-1)}(R_\lambda)\big].
\]
Thus the \(k\)-face volume density is, up to the constant
\(A_{d,q}/((q+1)!\,\omega_d)\), determined by the \(q\)-th moment of the
hyperbolic surface-growth factor at the nearest-neighbour radius of a typical
point.  This is consistent with the underlying geometric construction: a \(k\)-face has codimension \(q\), and locally arises
from \(q+1\) nuclei on the boundary of an empty hyperbolic ball.  The
void probability of this ball is \(\exp\{-\lambda b_d(r)\}\), while the
factor \(\sinh^{q(d-1)}(r)\) reflects the remaining \(q\) angular degrees of
freedom.
\end{remark}

\section{Applications to the IPVT}

\begin{table}[t]
\centering
\renewcommand{\arraystretch}{1.75}
\setlength{\tabcolsep}{5pt}

\begin{minipage}{0.47\textwidth}
\centering
\[
\begin{array}{c|ccc}
 & d=2 & d=3 & d=4 \\ \hline
k=0
& \dfrac{1}{\pi}
& \dfrac{16\pi}{35}
& \dfrac{1287}{16\pi^2}
\\
k=1
& \dfrac{2}{\pi}
& \dfrac{8\pi}{15}
& \dfrac{16384}{231\pi^2}
\\
k=2
& -
& \dfrac{4}{3}
& \dfrac{105}{8\pi}
\\
k=3
& -
& -
& \dfrac{32}{5\pi}
\end{array}
\]
\caption{Values of $D_{d,k}$ for $2\leq d\leq 4$.}
\label{tab:Ddk}
\end{minipage}
\hfill
\begin{minipage}{0.47\textwidth}
\centering
\[
\begin{array}{c|ccc}
 & d=2 & d=3 & d=4 \\ \hline
k=0
& 1
& 1
& 1
\\
k=1
& \dfrac{4}{3}
& \dfrac{7}{12}
& \dfrac{524288}{1486485}
\\
k=2
& -
& \dfrac{35}{12\pi}
& \dfrac{7\pi}{85}
\\
k=3
& -
& -
& \dfrac{1024\pi}{6075}
\end{array}
\]
\caption{Values of
\(\esp{\cH^k(F^{\rm typ}_{d,k})}\) for \(2\le d\le4\).}
\label{tab:typical-face-volumes}
\end{minipage}
\end{table}

We now turn to the low-intensity regime and its relation with the ideal Poisson--Voronoi tessellation (IPVT).  The latter is obtained as the limit in distribution of hyperbolic Poisson--Voronoi tessellations when the intensity $\lambda$ of the
underlying Poisson process tends to zero and was introduced in \cite{IPVT}. In contrast to the Euclidean case, where changing the intensity merely amounts to a rescaling, the intensity parameter in hyperbolic space has genuine geometric content.  Letting
\(\lambda\downarrow0\) produces a non-trivial limiting random tessellation, denoted by \(\IPVT{\HH^d}\), whose cells are unbounded hyperbolic polytopes with exactly one ideal point at infinity. The explicit formula in Theorem \ref{thm:kVolDensity} allows us to compute the low-intensity limit
$$
D_{d,k} = \lim_{\lambda\downarrow 0}D_{d,k}(\lambda),
$$
which is the $k$-face volume density of \(\IPVT{\HH^d}\) according to \cite[Lemma 4.1]{IPVT}. Values for $d\in\{2,3,4\}$ are given in Table \ref{tab:Ddk}.

\begin{cor}[\textsc{Face volume densities for $\IPVT{\HH^d}$}]\label{cor:IPVTLimit}
For $d\geq 2$ and $k\in\{0,1,\ldots,d-1\}$ we have that, with $q=d-k$,
$$
D_{d,k} = {A_{d,q}\over q+1}{(d-1)^q\over\omega_d^{q+1}} \; .
$$
\end{cor}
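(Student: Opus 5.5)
We compute $\lim_{\lambda\downarrow0}D_{d,k}(\lambda)$ from the formula in Theorem~\ref{thm:kVolDensity}, using the Laplace-transform form of Remark~\ref{rem:Laplace}:
\[
D_{d,k}(\lambda)=\frac{A_{d,q}}{(q+1)!\,\omega_d}\,\lambda^{q+1}\,\mathfrak L[\psi^q](\lambda),\qquad \psi(s)=\sinh^{d-1}(b_d^{-1}(s)).
\]
The limit $\lambda\downarrow0$ of $\lambda^{q+1}\mathfrak L[\psi^q](\lambda)$ is governed by the large-$s$ behaviour of $\psi^q$. This is an Abelian (Tauberian-direction) statement: if $f(s)\sim c\,s^{q}$ as $s\to\infty$ and $f$ is locally integrable with at most polynomial growth, then $\lambda^{q+1}\mathfrak L[f](\lambda)\to c\,\Gamma(q+1)=c\,q!$.

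The first step is therefore the asymptotics of $\psi$. As $r\to\infty$,
\[
b_d(r)=\omega_d\int_0^r\sinh^{d-1}(t)\,\dd t\sim \frac{\omega_d}{(d-1)2^{d-1}}e^{(d-1)r},\qquad \sinh^{d-1}(r)\sim 2^{-(d-1)}e^{(d-1)r}.
\]
Hence $\sinh^{d-1}(r)\sim \frac{d-1}{\omega_d}\,b_d(r)$, that is,
\[
\psi(s)\sim \frac{d-1}{\omega_d}\,s \quad (s\to\infty),\qquad \psi(s)^q\sim \Big(\frac{d-1}{\omega_d}\Big)^q s^q .
\]
More robustly, L'Hôpital applied to $b_d(r)/\sinh^{d-1}(r)$ gives the limit $\omega_d\sinh^{d-1}(r)/\big((d-1)\sinh^{d-2}(r)\cosh(r)\big)\to\omega_d/(d-1)$, which avoids any constant-tracking.

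The second step is the Abelian limit. Write $\psi^q(s)=c\,s^q+g(s)$ with $c=((d-1)/\omega_d)^q$ and $g(s)=o(s^q)$. Then $\lambda^{q+1}\mathfrak L[c\,s^q](\lambda)=c\,q!$ exactly. For the error term, given $\varepsilon>0$, choose $S$ with $|g(s)|\le\varepsilon s^q$ for $s>S$. On $[0,S]$ the function $\psi^q$ is bounded, since $\psi$ is continuous and $\psi(0)=0$, so that part of the transform is $O(1)$ and is killed by the factor $\lambda^{q+1}$. The tail contributes at most $\varepsilon\, q!$. Letting $\lambda\downarrow0$ and then $\varepsilon\downarrow 0$ gives the limit.

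Combining the two steps,
\[
D_{d,k}=\frac{A_{d,q}}{(q+1)!\,\omega_d}\,q!\,\frac{(d-1)^q}{\omega_d^q}=\frac{A_{d,q}}{q+1}\,\frac{(d-1)^q}{\omega_d^{q+1}},
\]
as claimed. The only genuinely delicate point is justifying the interchange of limit and integral, which the $\varepsilon$-splitting above handles. Alternatively, one can work directly in the variable $r$ with the substitution $u=\lambda b_d(r)$ and dominated convergence, using the bound $\sinh^{d-1}(r)\le C(1+b_d(r))$ valid for all $r\ge0$. The identification of this limit with the $k$-face volume density of $\IPVT{\HH^d}$ is taken from \cite[Lemma 4.1]{IPVT}, as stated before the corollary.
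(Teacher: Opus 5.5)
Your proposal is correct and follows the paper's proof: you rewrite the integral as $\omega_d^{-1}$ times the Laplace transform of $\psi^q$, use $\psi(s)\sim\frac{d-1}{\omega_d}\,s$, and pass to the limit $\lambda\downarrow0$ through an Abelian argument. The only difference is that you prove the Abelian limit directly by $\varepsilon$-splitting, which is valid here since $\psi^q$ and $s^q$ are bounded on $[0,S]$; the paper instead cites Karamata's Abelian theorem for Laplace transforms from Feller.
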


\begin{remark}
To ease comparison with the existing literature, we remark that in \cite{IPVT}, the constant which we call $D_{d,k}$ is denoted {by} $\tilde{\rm I}_{d,k}$. In particular, it has been shown in \cite[Proposition 5.3]{IPVT} by computing the slope at the origin of the hole probability for the zero cell of $\IPVT{\Hd}$ that
$$
\tilde{\rm I}_{d,d-1}=\frac{\Gamma({d\over 2})\Gamma(d)}{\Gamma(\frac{d-1}{2})\Gamma(d-\frac{1}{2})}={d-1\over 2}{\omega_{2d-1}\cdot\omega_{d+1}\over\omega_{2d}\cdot\omega_d} \; .
$$
This matches our result for $D_{d,d-1}$.
\end{remark}

\begin{remark}
For \(d=2\), the numerical ordering of the face-volume densities is
\[
D_{2,0}=\frac1\pi<\frac2\pi=D_{2,1}.
\]
For \(d=3\), the sequence of the $k$-volume densities is not monotone, since
\[
D_{3,0}=\frac{16\pi}{35}
<
D_{3,1}=\frac{8\pi}{15}
>
D_{3,2}=\frac43 .
\]
Starting from dimension \(d=4\), however, the face-volume densities are strictly decreasing in the face dimension. In fact, one can show that $D_{d,0}>D_{d,1}>\ldots>D_{d,d-1}$ {for all $d \geq 4$.}
\end{remark}

We record {now} several consequences of Corollary~\ref{cor:IPVTLimit}. The first is concerned with the volume of the typical $k$-face of the IPVT. Using the notation from \cite[Section~4.1]{IPVT}, let \({\rm I}_{d,k}\) denote the \(k\)-face counting density of the IPVT and \(D_{d,k}\) denote the corresponding \(k\)-volume intensity.  While the IPVT does not admit a typical full-dimensional cell in the usual sense, the typical \(k\)-face for $k\in\{0,1,\ldots,d-1\}$ is well defined, since the \(k\)-faces of $\IPVT{\Hd}$ are almost
surely bounded and have positive and finite counting density. We denote by \(F^{\rm typ}_{d,k}\) the typical \(k\)-face of the IPVT, in the sense of the Palm distribution associated with the \(k\)-face process, as defined in \cite[Section~4]{IPVT}. Moreover, putting $c(a)=\pi^{-1/2}\,\Gamma(a)/\Gamma(a-1/2)$, the constant
$$
\mathsf{j}_{d,k} = 2{d\choose k}c\Big({d^2\over 2}\Big)\int_0^\infty \cosh^{-(d^2-1)}u\,\Re\Big({1\over 2}+\mathfrak{i}\,c\Big({d+1\over 2}\Big)\int_0^u\cosh^{d-1}v\,\dd v\Big)^k\dd u
$$
is the angle-sum constant appearing in \cite[Equation~(4.12)]{IPVT}, and 
$$
{\rm IDV}_d={\pi\over(d-1)^{d+1}}{\omega_d^{d+1}\cdot\omega_{d^2-1}\over\omega_{d+1}^d\cdot\omega_{d^2}}
$$
denotes the mean hyperbolic volume of the typical ideal Delaunay simplex. We stress the fact that all these quantities are known analytically from \cite{IPVT,BetaStar,Kabluchko21}. The only missing ingredient for a closed-form expression for
\(\esp{\cH^k(F^{\rm typ}_{d,k})}\) is therefore the \(k\)-volume intensity \(D_{d,k}\), which in turn is provided by Corollary~\ref{cor:IPVTLimit}.  Combining it
with the known formula for the \(k\)-face counting density yields the expression below.  Some explicit values of
\(\esp{\cH^k(F^{\rm typ}_{d,k})}\), for \(d\in\{2,3,4\}\), are displayed in Table~\ref{tab:typical-face-volumes}.

\begin{cor}[\textsc{Mean volumes of typical faces in the IPVT}]\label{cor:TypicalkFace}
For \(k\in\{0,1,\ldots,d-1\}\), with \(q=d-k\),
\[
\esp{\cH^k(F^{\rm typ}_{d,k})}
=
{A_{d,q}\,{\rm IDV}_d\over (d+1)\,\mathsf{j}_{d,k}}\,
{(d-1)^q\over\omega_d^{q+1}} .
\]
\end{cor}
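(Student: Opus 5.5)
We will derive Corollary~\ref{cor:TypicalkFace} from the standard Palm/mean-value relation between the \(k\)-volume density, the \(k\)-face counting density and the typical \(k\)-face. By the definition of \(F^{\rm typ}_{d,k}\) as the Palm distribution of the (isometry-invariant) \(k\)-face process of \(\IPVT{\Hd}\), a Campbell-type (refined Campbell / mass transport) argument, as in \cite[Section~4]{IPVT}, gives
\[
D_{d,k}={\rm I}_{d,k}\,\esp{\cH^k(F^{\rm typ}_{d,k})}.
\]
Concretely, we apply the refined Campbell theorem to the marked point process of \(k\)-faces, each marked by its centre function (e.g.\ a measurable isometry-equivariant centre) together with its shape. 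We use the function \(F\mapsto \cH^k(F\cap W)\), and then use isometry invariance to replace \(\cH^k(F\cap W)\) by \(\cH^k(F)\mathbf 1\{c(F)\in W\}\) in expectation. This replacement is justified by the standard unimodularity/mass-transport principle in \(\Hd\), since the \(k\)-faces are a.s.\ bounded. Hence \(\esp{\cH^k(F^{\rm typ}_{d,k})}=D_{d,k}/{\rm I}_{d,k}\), provided \(0<{\rm I}_{d,k}<\infty\), which is known from \cite{IPVT}.

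Next we insert the two explicit ingredients. Corollary~\ref{cor:IPVTLimit} gives \(D_{d,k}=\frac{A_{d,q}}{q+1}\frac{(d-1)^q}{\omega_d^{q+1}}\). For the counting density we use the formula from \cite[Section~4]{IPVT} (derived there from \cite{BetaStar,Kabluchko21}), which expresses \({\rm I}_{d,k}\) through the angle-sum constant and the mean ideal Delaunay simplex volume. The form we expect is
\[
{\rm I}_{d,k}=\frac{\mathsf{j}_{d,k}}{{\rm IDV}_d}\cdot\frac{d+1}{q+1}.
\]
This identity comes from the duality between \(k\)-faces of the Voronoi tessellation and \((d-k)\)-faces of the ideal Delaunay tessellation. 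Each ideal Delaunay simplex has \(\binom{d+1}{q+1}\) faces of dimension \(q\). The density of Delaunay simplices is \(1/{\rm IDV}_d\), and the internal-angle sums at those faces are encoded in \(\mathsf{j}_{d,k}\) (which already carries the factor \(\binom dk\)).

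Dividing the two expressions, the factors \(q+1\) cancel and we obtain
\[
\esp{\cH^k(F^{\rm typ}_{d,k})}=\frac{A_{d,q}\,{\rm IDV}_d}{(d+1)\,\mathsf{j}_{d,k}}\,\frac{(d-1)^q}{\omega_d^{q+1}},
\]
as claimed.

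The main obstacle is matching normalisations exactly. We have to verify that the formula for \({\rm I}_{d,k}\) in \cite[Equation~(4.12)]{IPVT} indeed reads \((d+1)\mathsf{j}_{d,k}/((q+1){\rm IDV}_d)\). That paper uses intensity \(\lambda^{d-1}\) rather than \(\lambda\), but this is irrelevant in the limit, and its convention for the factor \(\binom dk\) inside \(\mathsf{j}_{d,k}\) must be tracked. As a sanity check we will test \(k=0\). Here \(q=d\), and the typical \(0\)-face has \(\cH^0=1\), so the right-hand side must equal \(1\); this is equivalent to \(D_{d,0}={\rm I}_{d,0}\). That identity pins down the combinatorial factor and is consistent with the first row of Table~\ref{tab:typical-face-volumes}.
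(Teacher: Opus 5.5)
Your proposal is correct and matches the paper's proof. The paper also writes $\esp{\cH^k(F^{\rm typ}_{d,k})}=D_{d,k}/{\rm I}_{d,k}$, citing \cite[Proposition~4.2]{IPVT} instead of re-deriving it through the refined Campbell theorem. It then inserts Corollary~\ref{cor:IPVTLimit} together with ${\rm I}_{d,k}=\frac{d+1}{q+1}\frac{\mathsf{j}_{d,k}}{{\rm IDV}_d}$ from \cite[Theorem~4.7]{IPVT}, which is exactly the normalisation you anticipated.
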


Corollary \ref{cor:IPVTLimit} also has a consequence for ordinary Poisson--Voronoi tessellations of positive intensity in $\Hd$ before passing to the ideal limit.  Although the limiting IPVT does not come with a canonical typical cell in the usual Palm sense,
the Poisson--Voronoi tessellation with intensity \(\lambda>0\) does.  The next corollary shows that the constants \(D_{d,k}\) describe the low-intensity blow-up rate of the expected total \(k\)-volume of the
\(k\)-faces of this typical cell.

\begin{cor}[\textsc{Blowup rates for skeletons of the typical Poisson--Voronoi cell}]\label{cor:LimKFaces}
Let \(Z_{d,\lambda}^{\rm typ}\) denote the typical cell of the Poisson--Voronoi tessellation in \(\mathbb H^d\) with intensity \(\lambda>0\).  Then, for \(k\in\{0,1,\ldots,d-1\}\),
\[
\lim_{\lambda\downarrow0}\lambda\,
\EE\Bigg[
\sum_{F\in\mathcal F_k(Z_{d,\lambda}^{\rm typ})}
\cH^k(F)\Bigg]
=
(d+1-k)D_{d,k}.
\]
\end{cor}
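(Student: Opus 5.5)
The plan is a mean-value (Palm/Neveu exchange) identity relating the $k$-volume density of the tessellation to the typical cell, followed by the low-intensity limit from Corollary~\ref{cor:IPVTLimit}.

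\textbf{Step 1 (exchange formula).} Each $k$-face $F\in X_k(\eta_\lambda)$ belongs to exactly $q+1=d+1-k$ cells almost surely, by the general position of the Poisson process: a $k$-face is equidistant from exactly $q+1$ nuclei. For a bounded Borel set $W$, I would write
\[
\sum_{x\in\eta_\lambda}\sum_{F\in\mathcal F_k(C(x,\eta_\lambda))}\cH^k(F\cap W)=(q+1)\sum_{F\in X_k(\eta_\lambda)}\cH^k(F\cap W).
\]
Take expectations and apply the Mecke formula to the left-hand side. This gives
\[
\lambda\int_{\Hd}\EE\Big[\sum_{F\in\mathcal F_k(C(x,\eta_\lambda\cup\{x\}))}\cH^k(F\cap W)\Big]\,\cH^d(\dd x).
\]
Next I would use isometry invariance: move $x$ to $\origin$ by an isometry $\theta_x$, where the family can be chosen measurably. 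The cell $C(\origin,\eta_\lambda\cup\{\origin\})$ is the typical cell $Z^{\rm typ}_{d,\lambda}$. Then exchange the integrals. The $x$-integral of $\cH^k(\theta_x^{-1}F\cap W)$ equals $\cH^k(F)\cH^d(W)$ by unimodularity of the isometry group of $\Hd$, together with Fubini and invariance of Haar measure. The result is
\[
\lambda\,\EE\Big[\sum_{F\in\mathcal F_k(Z_{d,\lambda}^{\rm typ})}\cH^k(F)\Big]=(q+1)\,D_{d,k}(\lambda).
\]
The typical cell is almost surely bounded and all expectations are nonnegative, so Tonelli applies without integrability issues.

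\textbf{Step 2 (limit).} Let $\lambda\downarrow0$. The left-hand side of Step 1 is exactly the quantity in the corollary. The right-hand side converges to $(q+1)D_{d,k}$, where $D_{d,k}=\lim_{\lambda\downarrow0}D_{d,k}(\lambda)$ is the limit computed in Corollary~\ref{cor:IPVTLimit}. That limit was obtained from Theorem~\ref{thm:kVolDensity} through the Laplace-transform asymptotics of Remark~\ref{rem:Laplace}, so no new limit computation is needed.

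\textbf{Main obstacle.} The delicate point is the rigorous version of the mass-transport step in Step 1 in hyperbolic space. I would handle it in one of two ways.

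One option is to apply the mass-transport principle for isometry-invariant random measures on $\Hd$ directly. Each nucleus $x$ sends the mass $\cH^k$ restricted to the $k$-faces of its cell. Each $k$-dimensional patch receives mass from $q+1$ nuclei, so the two intensities differ by the factor $q+1$.

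The other option is to write both sides as intensity measures of stationary marked point processes and invoke the standard Palm inversion/refined Campbell theorem, which holds on homogeneous spaces with unimodular isometry group.

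Either route also requires the almost-sure general position fact that every $k$-face is shared by exactly $q+1$ cells. This holds because four (resp.\ $q+2$) Poisson points lying on a common lower-dimensional sphere is a null event.
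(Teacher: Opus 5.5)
Your argument is correct and matches the paper's proof, which likewise uses the balance relation $\lambda\,\EE\big[\sum_{F\in\mathcal F_k(Z^{\rm typ}_{d,\lambda})}\cH^k(F)\big]=(d+1-k)D_{d,k}(\lambda)$. The paper justifies this relation by normality (each $k$-face lies in exactly $d+1-k$ cells) and the cell intensity $\lambda$, citing \cite[Proposition 4.3]{IPVT}, and then lets $\lambda\downarrow0$ using Corollary~\ref{cor:IPVTLimit}.

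One step of your Step 1 is wrong as written: for a fixed face $F$ and a fixed measurable section $x\mapsto\theta_x$, the map $x\mapsto\theta_x^{-1}z$ is in general not volume-preserving on $\Hd$ (already for transvections in $\HH^2$, its Jacobian at $x=\origin$ is $\cosh\dist(\origin,z)$). Hence $\int_{\Hd}\cH^k(\theta_x^{-1}F\cap W)\,\cH^d(\dd x)=\cH^k(F)\cH^d(W)$ fails pathwise. It holds only in expectation, after averaging over rotations about $\origin$; this is legitimate because the law of $Z^{\rm typ}_{d,\lambda}$ is rotation invariant, and it is this averaging that turns the $x$-integral into a Haar integral where unimodularity applies. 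Alternatively, the mass-transport principle you mention gives the identity directly.
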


As another application, we mention that the formula in Corollary~\ref{cor:IPVTLimit} also allows us to read off the behaviour of the $k$-volume density $D_{d,k}$ of $\IPVT{\Hd}$ in high dimensions, that is, as $d\to\infty$.  We only record the main regimes.  We use the quotient form of Stirling's formula, which ensures that for fixed
\(a,b\in\mathbb R\),
\[
{\Gamma(x+a)\over\Gamma(x+b)}
=
x^{a-b}
\left(
1+{(a-b)(a+b-1)\over 2x}+O(x^{-2})
\right),
\qquad x\to\infty ,
\]
see \cite{TricomiErdelyi1951}. Now, if the codimension \(q=d-k\) is fixed, then
\[
D_{d,d-q}
=
{d^q\over\sqrt{q+1}}
\left(
1-{q(q+2)^2\over 4(q+1)d}+O(d^{-2})
\right),
\qquad d\to\infty .
\]
For \(q=1\), this gives $D_{d,d-1}={d\over\sqrt 2}
(1-{9\over 8d}+O(d^{-2}))$, in agreement with the discussion after Proposition 5.3 in \cite{IPVT}. If on the other hand \(k\geq0\) is {kept} fixed, then
\[
D_{d,k}
=
{\omega_{k+1}e^{-3/4}\over \sqrt 2\,(2\pi)^{k/2}}\,
d^{\,d-(k+1)/2}e^{-d/2}
\left(
1+{15k-16\over 12d}+O(d^{-2})
\right),
\qquad d\to\infty .
\]
In particular, $D_{d,0}=\sqrt 2\,e^{-3/4}\,d^{d-1/2}e^{-d/2}(1+O(d^{-1}))$ in agreement with \cite[Theorem 4.7]{IPVT}. Finally, if $k=\alpha d+O(1)$ with \(\alpha\in(0,1)\), then Stirling's formula applied on the
logarithmic scale leads to
%Old version
%\[
%\log D_{d,k}
%=
%(1-\alpha)d\log d
%-
%{1-\alpha\over2}d
%-
%{\alpha %d\over2}\log\alpha
%+
%O(\log d).
%\]
%
\[
\log D_{d,k} = (1-\alpha) d \log d -\frac{1}{2}\left(1-\alpha + \alpha \log \alpha \right) d
+
O(\log d) \, .
\]

\section{Notation and preliminaries}\label{sec:Prelim}

In this section we collect some notation and background material that will be used
throughout the paper. The purpose of this section is mainly to provide a common
reference point for the geometric conventions we use.
Some notation will nevertheless be recalled, or introduced in a more specialized
form, at the place where it first becomes relevant. This is meant to keep the
main statements readable while still making the technical parts of the paper
self-contained.

\medbreak

When we work in the Euclidean space $\R^d$ we write $\|\cdot\|$ for the Euclidean norm {and} $\langle\cdot,\cdot\rangle$ for the Euclidean scalar product. The dimension will always be clear from the context. We write \(B_{\mathbb R^d}(z,r)=\{x\in\R^d:\|z-x\|\leq r\}\) and \(B_{\mathbb R^d}^\circ(z,r)=\{x\in\R^d:\|z-x\|<r\}\) for the closed and open Euclidean balls in $\R^d$ with centre \(z\in\R^d\) and radius \(r>0\), respectively. Finally, we write \(\omega_d=2\pi^{d/2}/\Gamma(d/2)\) for the surface measure of the Euclidean unit sphere in \(\mathbb R^d\).

We denote by \(\Hd\) the \(d\)-dimensional {real} hyperbolic space of constant {sectional}
curvature \(-1\), and by \(\dist(\cdot,\cdot)\) its geodesic distance. Its
Riemannian volume measure is denoted by \(\cH^d\). Equivalently, this is the
\(d\)-dimensional Hausdorff measure induced by \(\dist(\cdot,\cdot)\). Throughout
the paper, \(\origin\) denotes an arbitrary but fixed point of \(\Hd\), which we
refer to as the origin. The closed geodesic ball in \(\Hd\) with centre \(x\) and hyperbolic radius \(r>0\) is
denoted by \(B_{\Hd}(x,r)\), and the corresponding open ball by
\(B_{\Hd}^\circ(x,r)\). 

More generally, if \(M\) is a Riemannian manifold, then, by a slight abuse of
notation, \(\cH^s\), \(s\ge0\), denotes the \(s\)-dimensional Hausdorff measure
induced by the Riemannian distance on \(M\). For \(y\in M\), we write \(T_yM\)
for the tangent space at \(y\), endowed with the Riemannian inner product
\(\langle\cdot,\cdot\rangle_y\). Its unit sphere is denoted by
\(S_yM=\{u\in T_yM:\langle u,u\rangle_y=1\}\), and \(\sigma_y\) denotes the
spherical Lebesgue measure on \(S_yM\).

For a Riemannian manifold $M$ the exponential map at \(y\in M\) is denoted by \(\exp_y:T_yM\to M\). Thus, for
\(u\in S_yM\), the curve \(r\mapsto\exp_y(ru)\) is the unit-speed geodesic
starting from \(y\) in {the} direction {of} \(u\), as long as \(ru\) belongs to the domain
of \(\exp_y\). If \(x\in M\) does not lie in the cut locus of \(y\), then \(x\)
has unique geodesic polar coordinates \(x=\exp_y(ru)\), where
\(r>0\) is the geodesic distance of $x$ to $y$ and \(u\in S_yM\). In these coordinates, the Riemannian volume
measure is given by
$\cH^d(\dd x)=\Jac_y(r,u)\,\dd r\,\sigma_y(\dd u)$,
where
\(d=\dim M\) and \(\Jac_y(r,u)\) denotes the radial Jacobian of the exponential
map. In the {case of our interest} \(M=\Hd\), this Jacobian is independent of \(y\) and
\(u\), and equals \(\sinh^{d-1}(r)\). Thus, in hyperbolic space,
$$
\cH^d(\dd x)=\sinh^{d-1}(r)\,\dd r\,\sigma_y(\dd u),\qquad
x=\exp_y(ru).
$$
We refer to \cite{Lee18} for further background material on Riemannian manifolds, especially to \cite[Section 5.3]{Lee18} for the exponential map and to \cite[Corollary 10.17]{Lee18} for the representation of the volume in $\Hd$ in terms of geodesic polar coordinates.

\section{A new Blaschke--Petkantschin--type formula in hyperbolic space}

In the proof of Theorem \ref{thm:kVolDensity} we will make use of a new hyperbolic integral-geometric transformation formula of Blaschke--Petkantschin--type, which is in a similar spirit as the transformation formulas developed in \cite{CalkaChapronEnriquez,ChapronDiss2018}. To present it, we need some further notation. Let \(M\) be an \(m\)-dimensional smooth Riemannian manifold, let \(1\le \ell\le m\), and let \(F:M\to\mathbb R^\ell\) be a smooth map.  For
a point \(z\in M\) put
\[
J_F(z)
=
\sqrt{
\det\bigl(
\langle \nabla F_i(z),\nabla F_j(z)\rangle_z
\bigr)_{i,j=1}^{\ell}
},
\qquad F=(F_1,\ldots,F_\ell).
\]
So, \(J_F\) is the normal Jacobian of \(F\) and $\langle\cdot, \cdot \rangle_z$ denotes the Riemannian inner product in the tangent space $T_zM$ of $M$ at $z$. We use the same notation for product manifolds.

\begin{prop}[\textsc{Hyperbolic Blaschke--Petkantschin formula}]
\label{prop:radial-incidence}
Let \(d\geq2\), let \(k\in\{0,1,\ldots,d-1\}\), and put \(q= d-k\).
For \(\mathbf x=(x_0,\ldots,x_q)\in(\Hd)^{q+1}\), let
$$
L(\mathbf x)=\{y\in\Hd:\dist(y,x_0)=\dist(y,x_1)=\ldots=\dist(y,x_q)\}.
$$
For \(y\in L(\mathbf x)\), write
\(\rho(y,\mathbf x)=\dist(y,x_0)=\dist(y,x_1)=\ldots=\dist(y,x_q)\)
for the common distance.  Then, for every non-negative measurable function
\(h:\Hd\times[0,\infty)\to[0,\infty]\),
\begin{equation}\label{eq:radial-incidence-formula}
\begin{aligned}
&\int_{(\Hd)^{q+1}}
\int_{L(\mathbf x)}
h\bigl(y,\rho(y,\mathbf x)\bigr)\,
\cH^k(\dd y)\,
\cH^{d(q+1)}(\dd\mathbf x)
\\
& =
A_{d,q}
\int_{\Hd}
\int_0^\infty
h(y,r)\,
\sinh^{(d-1)(q+1)}(r)\,\dd r\,
\cH^d(\dd y),
\end{aligned}
\end{equation}
where $A_{d,q}$ is given by \eqref{eq:Adq}.
\end{prop}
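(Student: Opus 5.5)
The plan is to compute both sides of \eqref{eq:radial-incidence-formula} as an integral over the incidence manifold
\[
I=\{(\mathbf x,y)\in(\Hd)^{q+1}\times\Hd:\ y\in L(\mathbf x)\},
\]
once fibred over $\mathbf x$ and once over $y$. I would realise $I$ as the zero set $F^{-1}(0)$ of the smooth map $F=(F_1,\ldots,F_q)$ defined by $F_i(\mathbf x,y)=\dist(y,x_i)-\dist(y,x_0)$. This map is defined on the open set where $y\neq x_i$ for all $i$, which suffices up to null sets. There $I$ is a smooth submanifold of codimension $q$, i.e.\ of dimension $d(q+1)+k$, and its fibres over $\mathbf x$ are exactly the $k$-dimensional sets $L(\mathbf x)$. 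The degenerate configurations are those where the unit vectors at $y$ pointing to the $x_i$ are affinely dependent, or where $x_i=y$. These form a null set, and I would discard them at the start.

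\textbf{Step 1 (double fibration).} Apply the smooth coarea formula to the two projections $\pi_{\mathbf x}:I\to(\Hd)^{q+1}$ and $\pi_y:I\to\Hd$. The standard linear-algebra identity for a regular level set $F^{-1}(0)$ in a product manifold says that the ratio of the normal Jacobians of the two projections equals $J_{F(\mathbf x,\cdot)}(y)/J_{F(\cdot,y)}(\mathbf x)$. Here the numerator uses the gradients in the $y$-variable only, and the denominator those in the $\mathbf x$-variable only, in the notation $J_F$ introduced before the proposition. This yields
\[
\int_{(\Hd)^{q+1}}\int_{L(\mathbf x)}h\,\cH^k(\dd y)\,\dd\mathbf x=\int_{\Hd}\int_{\{\mathbf x:\,y\in L(\mathbf x)\}}h\,\frac{J_{F(\mathbf x,\cdot)}(y)}{J_{F(\cdot,y)}(\mathbf x)}\,\cH^{d(q+1)-q}(\dd\mathbf x)\,\cH^d(\dd y).
\]

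\textbf{Step 2 (the two Jacobians).} Write $x_i=\exp_y(r u_i)$ with $u_i\in S_y\Hd$. The gradient at $y$ of $\dist(\cdot,x_i)$ is $-u_i$, so the Gram matrix in the $y$-variable is that of the vectors $u_i-u_0$. Hence $J_{F(\mathbf x,\cdot)}(y)=q!\,\Delta(u_0,\ldots,u_q)$, where $\Delta$ is the $q$-volume of the simplex with vertices $u_0,\ldots,u_q$ in $T_y\Hd\cong\R^d$. In the $\mathbf x$-variable, the gradient of $\dist(y,x_i)$ at $x_i$ is a unit vector, and each lives in its own factor. The Gram matrix is therefore $\mathrm{Id}_q+\mathbf 1\mathbf 1^\top$, with determinant $q+1$, so $J_{F(\cdot,y)}=\sqrt{q+1}$.

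\textbf{Step 3 (the fibre over $y$).} Use geodesic polar coordinates about $y$ in every factor; there is no cut locus in $\Hd$. The product measure is then $\prod_i\sinh^{d-1}(r_i)\,\dd r_i\,\sigma_y(\dd u_i)$. The fibre is the diagonal $\{r_0=\cdots=r_q=r\}$. Its induced Hausdorff measure is $\sqrt{q+1}\,\dd r\,\sinh^{(d-1)(q+1)}(r)\prod_i\sigma_y(\dd u_i)$, because the diagonal direction has length $\sqrt{q+1}$ and the $(r_i)$ are orthogonal to the $u$-directions. The factor $\sqrt{q+1}$ cancels with the denominator from Step 2. What remains is $\int h(y,r)\sinh^{(d-1)(q+1)}(r)\,\dd r$ times $\int_{(S^{d-1})^{q+1}}q!\,\Delta\,\dd\sigma^{q+1}$. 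By the cited result \cite[Chapter 4]{RandomSimplices}, the latter integral equals $q!\,\omega_d^{q+1}\cdot A_{d,q}/(q!\,\omega_d^{q+1})=A_{d,q}$. This is the claim.

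\textbf{Main obstacle.} I expect Step 1 to be the hardest part. It requires stating and verifying the Jacobian-ratio identity for the double fibration carefully, with the correct normalisations of the normal Jacobians and of the induced Hausdorff measures on $I$ and on its fibres. I would first prove it as a pointwise linear-algebra statement: for a subspace $\ker DF\subset V\oplus W$ with $D_yF$ and $D_{\mathbf x}F$ both surjective, compute the ratio directly. Then I would integrate it via the coarea formula applied twice. As a consistency check, I would verify the case $q=d$, $k=0$ against \cite[Proposition 2.1.1]{ChapronDiss2018}.
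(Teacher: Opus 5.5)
Your proposal is correct and follows essentially the same route as the paper: the incidence set is a regular level set, the double fibration uses the coarea formula with the Jacobian-ratio identity, the fibre over $y$ is handled in polar coordinates where the $\sqrt{q+1}$ factors cancel, and the spherical simplex moment formula yields $A_{d,q}$. The only difference is that you take $F_i=\dist(y,x_i)-\dist(y,x_0)$ on $\{y\neq x_i\}$ instead of the paper's $\cosh\dist(y,x_i)-\cosh\dist(y,x_0)$; this drops the common factor $\sinh^q(r)$ from both partial Jacobians and so leaves their ratio $J(u_0,\ldots,u_q)/\sqrt{q+1}$ unchanged, and the excluded configurations (all $x_i=y$) form a null set on both sides.
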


%\MD{Comment after Pierre's remark 23.06 in Nancy: The case $k=0$ of the above formula should be in Chapron's thesis (see Proposition 2.1.1. of the file 2018PA1000098.pdf)}

\begin{remark}
\begin{itemize}
\item[(i)] The inner integral of the expression on the left-hand side of \eqref{eq:radial-incidence-formula} is understood in the almost-everywhere sense.  The
configurations \(\mathbf x\) for which \(L(\mathbf x)\) is non-empty but not a
smooth \(k\)-dimensional submanifold form a \(\cH^{d(q+1)}\)-null set and hence
do not affect the value of the integral.
\item[(ii)] The case \(k=0\), equivalently \(q=d\), is already contained in \cite[Proposition 2.1.1]{ChapronDiss2018}. 
Indeed, in this case, for every nondegenerate \((d+1)\)-tuple of points in
\(\Hd\), the set \(L(\mathbf x)\) is either empty or consists of a single point.
In the latter case, this point is the unique hyperbolic circumcentre.
After translating the notation in \cite{ChapronDiss2018} to curvature \(-1\) and integrating out the angular variables, the formula gives precisely the case \(q=d\) of \eqref{eq:radial-incidence-formula}. Proposition~\ref{prop:radial-incidence} shows that the same type of decomposition holds for all \(k\in\{0,\ldots,d-1\}\).
\end{itemize}
\end{remark}

\begin{proof}[Proof of Proposition~\ref{prop:radial-incidence}]
Put \(\mathbb{X}=(\Hd)^{q+1}\).  We consider the set
\(\mathcal I\subset\Hd\times\mathbb{X}\) consisting of all pairs
\((y,\mathbf x)\), with \(\mathbf x=(x_0,\ldots,x_q)\), for which
\(\dist(y,x_0)=\ldots=\dist(y,x_q)>0\).  The part with common radius zero can
only occur when \(x_0=\ldots=x_q=y\).  After projection to \(\mathbb{X}\), this is
contained in the diagonal \(\{x_0=\ldots=x_q\}\), which has
\(\cH^{d(q+1)}\)-measure zero.  It also gives no contribution to the right-hand
side of \eqref{eq:radial-incidence-formula}, since the singleton \(\{0\}\) has
zero Lebesgue measure in the radial variable.

We first describe \(\mathcal I\) as a level set of a suitable function. For that purpose, define 
\[
F:\Hd\times\mathbb{X}\to\mathbb R^q,
\qquad
(y,\mathbf x)\mapsto(F_1(y,\mathbf x),\ldots,F_q(y,\mathbf x)),
\]
where
\[
F_i(y,\mathbf x)
=
\cosh\dist(y,x_i)-\cosh\dist(y,x_0),
\qquad i\in\{1,\ldots,q\}.
\]
The map \((y,x)\mapsto \cosh\dist(y,x)\) is smooth on
\(\Hd\times\Hd\).  Away from the diagonal this follows from the smoothness of
the distance function.  Near the diagonal, one may use geodesic normal
coordinates depending smoothly on the centre \(x\).  In these coordinates the
function \(y\mapsto\cosh\dist(y,x)\) is represented by
$v\mapsto \cosh |v|$,
which is smooth also at \(v=0\), since it has an even power series (this is the
reason why we work with \(\cosh\dist(\cdot,\cdot)\) rather than
\(\dist(\cdot,\cdot)\)).  Hence, \(F\) is smooth on \(\Hd\times\mathbb X\).
Since \(r\mapsto\cosh r\) is strictly increasing on \([0,\infty)\), the equation
\(F(y,\mathbf x)=0\) is equivalent to
\[
\dist(y,x_0)=\dist(y,x_1)=\ldots=\dist(y,x_q).
\]
Thus \(\mathcal I\) is the part of \(F^{-1}(0)\) on which the common distance is
positive.

We claim that \(\mathcal I\) is a smooth submanifold of \(\Hd\times\mathbb{X}\).  Fix
\((y,\mathbf x)\in\mathcal I\), write the common distance as \(r>0\), and
write \(x_i=\exp_y(ru_i)\) with \(u_i\in S_y\Hd\), \(i\in\{0,1,\ldots,q\}\).  Consider
the derivative of \(F\) with respect to the \(\mathbb{X}\)-variables, keeping \(y\)
fixed. By the first variation formula for the Riemannian distance \cite[Theorem 6.3]{Lee18}, $\nabla_x\dist(y,x)$ is the unit vector at $x$ pointing away from $y$ along the geodesic from $y$ to $x$. Hence $\nabla_x\cosh\dist(y,x)=\sinh\dist(y,x)\nabla_x\dist(y,x)$ and therefore, at a point with $\dist(y,x)=r$ we have $\|\nabla_x\cosh\dist(y,x)\|=\sinh r$. Since the \(i\)-th component \(F_i\) depends only on \(x_0\) and \(x_i\), we have
$$
\nabla_{\mathbf x}F_i=\big(-\nabla_{x_0}\cosh\dist(y,x_0),0,\ldots,0,\nabla_{x_i}\cosh\dist(y,x_i),0,\ldots,0\big),
$$
and it follows that $\|\nabla_{\mathbf x}F_i\|^2=\sinh^2r+\sinh^2r=2\sinh^2r$. This gives the diagonal entries of the Gram matrix. For $i\neq j$ the vectors $\nabla_{\mathbf x}F_i$ and $\nabla_{\mathbf x}F_j$ have only one common nonzero component, namely the $x_0$-component. Thus
$$
\langle\nabla_{\mathbf x}F_i,\nabla_{\mathbf x}F_j\rangle_{\mathbf x} = \langle -\nabla_{x_0}\cosh\dist(y,x_0),-\nabla_{x_0}\cosh\dist(y,x_0)\rangle_{x_0} = \sinh^2r,
$$
which yields the off-diagonal entries. Summarizing, we see that the Gram matrix of
\(\nabla_{\mathbf x}F_1,\ldots,\nabla_{\mathbf x}F_q\) is
\(\sinh^2(r)(I_q+\mathbf 1\mathbf 1^\top)\), where ${\bf 1}=(1,\ldots,1)^\top\in\R^q$ is the vector all whose coordinates are equal to one. Since $\det(I_q+\mathbf 1\mathbf 1^\top)=1+\mathbf 1^\top\mathbf 1=1+q$, the determinant of the Gram matrix is
\((q+1)\sinh^{2q}(r)\), and therefore the normal Jacobian of
\(\mathbf x\mapsto F(y,\mathbf x)\) is
\begin{equation}\label{eq:JxF-incidence}
J_{\mathbf x}F(y,\mathbf x)=\sqrt{q+1}\,\sinh^q(r).
\end{equation}
In particular, this is strictly positive as soon as $r>0$, so \(F\) is a submersion along \(\mathcal I\). The implicit
function theorem on manifolds shows that \(\mathcal I\) is a smooth submanifold of dimension
\(d+(q+1)d-q=d(q+1)+k\), see \cite[Chapter 5]{LeeSmoothMF}.

Let \(\pi_\mathbb{X}:\mathcal I\to \mathbb{X}\) and \(\pi_{\Hd}:\mathcal I\to\Hd\) denote the canonical coordinate projections.  We apply the smooth coarea formula for Riemannian manifolds (see \cite[Theorem 13.4.2]{Burago} or (A-2) in the appendix of \cite{Howard1993}) to these two projections.
First, applying it to \(\pi_\mathbb{X}\) gives the left-hand side of
\eqref{eq:radial-incidence-formula} as an integral over \(\mathcal I\), namely
\[
\int_\mathbb{X}
\int_{L(\mathbf x)}
h\bigl(y,\rho(y,\mathbf x)\bigr)\,
\cH^k(\dd y)\,
\cH^{d(q+1)}(\dd\mathbf x)
=
\int_{\mathcal I}
h\bigl(y,\rho(y,\mathbf x)\bigr)\,
J_{\pi_\mathbb{X}}(y,\mathbf x)\,
\cH^{d(q+1)+k}(\dd(y,\mathbf x)),
\]
where we recall that \(J_{\pi_\mathbb{X}}\) denotes the normal Jacobian of the projection \(\pi_\mathbb{X}\).

% Replacement for the coarea-swap paragraph

We now change the order in which the set $\mathcal{I}$ is sliced.  Let
\(J_F\) denote the normal Jacobian of
\(F:\Hd\times\mathbb X\to\mathbb R^q\), and let \(J_yF\) and
\(J_{\mathbf x}F\) denote the normal Jacobians of the partial maps obtained by
keeping \(\mathbf x\), respectively \(y\), fixed.  On the smooth submanifold
\(\mathcal I\subset F^{-1}(0)\), the normal Jacobians of the restricted
coordinate projections satisfy the identities
\[
J_{\pi_\mathbb X}(y,\mathbf x)
=
{J_yF(y,\mathbf x)\over J_F(y,\mathbf x)},
\qquad
J_{\pi_{\Hd}}(y,\mathbf x)
=
{J_{\mathbf x}F(y,\mathbf x)\over J_F(y,\mathbf x)},
\]
compare with the derivation of Equation (1) in \cite{Zaehle1990}. These identities follow by applying the smooth coarea formula in local product coordinates to the regular level-set representation of $\mathcal{I}$.  Consequently,
\[
{J_{\pi_\mathbb X}(y,\mathbf x)\over J_{\pi_{\Hd}}(y,\mathbf x)}
=
{J_yF(y,\mathbf x)\over J_{\mathbf x}F(y,\mathbf x)}.
\]
Applying the smooth coarea formula to
\(\pi_{\Hd}:\mathcal I\to\Hd\) with the integrand
\[
h\bigl(y,\rho(y,\mathbf x)\bigr)
{J_{\pi_\mathbb X}(y,\mathbf x)\over J_{\pi_{\Hd}}(y,\mathbf x)}
\]
therefore gives
\begin{equation}\label{eq:incidence-swap}
\begin{aligned}
&\int_\mathbb{X}
\int_{L(\mathbf x)}
h\bigl(y,\rho(y,\mathbf x)\bigr)\,
\cH^k(\dd y)\,
\cH^{d(q+1)}(\dd\mathbf x)
\\
& =
\int_{\Hd}
\int_{\mathcal I_y}
h\bigl(y,\rho(y,\mathbf x)\bigr)\,
{J_yF(y,\mathbf x)\over J_{\mathbf x}F(y,\mathbf x)}\,
\cH^{d(q+1)-q}(\dd\mathbf x)\,
\cH^d(\dd y),
\end{aligned}
\end{equation}
where $\mathcal I_y=\{\mathbf x\in\mathbb X:(y,\mathbf x)\in\mathcal I\}$ is the fibre above $y$.

We now compute the quotient of Jacobians in \eqref{eq:incidence-swap}.  For
\((y,\mathbf x)\in\mathcal I\), write again \(x_i=\exp_y(ru_i)\) for
\(i\in\{0,1,\ldots,q\}\) with $u_i\in S_y\Hd$ in geodesic polar coordinates around $y$.  As above, by the first variation formula for the Riemannian distance,
\(\nabla_y F_i(y,\mathbf x)=\sinh(r)(u_0-u_i)\) for \(i\in\{1,\ldots,q\}\).  Hence,
\[
J_yF(y,\mathbf x)
=
\sinh^q(r)\,
J(u_0,\ldots,u_q),
\qquad
J(u_0,\ldots,u_q)
\coloneqq
\sqrt{
\det\bigl(
\langle u_i-u_0,u_j-u_0\rangle_y
\bigr)_{i,j=1}^q
}.
\]
Together with \eqref{eq:JxF-incidence}, this yields
\begin{equation}\label{eq:JacobiFraction}
{J_yF(y,\mathbf x)\over J_{\mathbf x}F(y,\mathbf x)}
=
{J(u_0,\ldots,u_q)\over\sqrt{q+1}}.
\end{equation}

It remains to represent the Hausdorff measure $\cH^{d(q+1)-q}$ on \(\mathcal I_y\) in geodesic polar coordinates.  For fixed \(y\), define the map
\[
\Phi_y:(0,\infty)\times(S_y\Hd)^{q+1}\to \mathcal I_y,(r,u_0,\ldots,u_q)\mapsto\bigl(\exp_y(ru_0),\ldots,\exp_y(ru_q)\bigr).
\]
This parametrizes \(\mathcal I_y\)
smoothly and one-to-one, because the exponential map $\exp_y:(0,\infty)\times S_y\Hd\to\Hd\setminus\{y\}$ is a smooth diffeomorphism (there is no cut locus in $\Hd$).  For fixed \(r\), the angular map $u_i\mapsto \exp_y(ru_i)$
from \(S_y\Hd\) onto the geodesic sphere of radius \(r\) has Jacobian
\(\sinh^{d-1}(r)\) for each $i\in\{0,1,\ldots,q\}$.  Thus the \(q+1\) angular variables contribute the factor
\[
\sinh^{(d-1)(q+1)}(r)\,
\sigma_y(\dd u_0)\cdots\sigma_y(\dd u_q).
\]
The radial derivative of \(\Phi_y\) is
\[
\partial_r\Phi_y(r,u_0,\ldots,u_q)
=
\Big({\dd\over\dd r}\exp_y(ru_0),{\dd\over\dd r}\exp_y(ru_1),\ldots,{\dd\over\dd r}\exp_y(ru_q)\Big).
\]
Each component is the velocity vector of a unit-speed geodesic and thus has unit length. Since the
components lie in mutually orthogonal factors of the product
\((\Hd)^{q+1}\), it follows that
$$
\big\|\partial_r\Phi_y\big\|^2 = \sum_{i=0}^q\Big\|{\dd\over\dd r}\exp_y(ru_i)\Big\|^2 = q+1
$$
Moreover, the radial direction is orthogonal to all angular directions in the product tangent space $T_{\bf x}\mathbb{X}$, where ${\bf x}=\Phi_y(r,u_0,\ldots,u_q)$.  Hence
the Jacobian of \(\Phi_y\) is $\sqrt{q+1}\,\sinh^{(d-1)(q+1)}(r)$, and therefore
\[
\cH^{d(q+1)-q}(\dd\mathbf x)
=
\sqrt{q+1}\,
\sinh^{(d-1)(q+1)}(r)\,\dd r\,
\sigma_y(\dd u_0)\cdots\sigma_y(\dd u_q).
\]

Substituting the last identity and \eqref{eq:JacobiFraction} into \eqref{eq:incidence-swap}, the
factors \(\sqrt{q+1}\) cancel, and we obtain
\begin{equation}\label{eq:radial-incidence-directional}
\begin{aligned}
&\int_\mathbb{X}
\int_{L(\mathbf x)}
h\bigl(y,\rho(y,\mathbf x)\bigr)\,
\cH^k(\dd y)\,
\cH^{d(q+1)}(\dd\mathbf x)
\\
& =
\int_{\Hd}
\int_{(S_y\Hd)^{q+1}}
\int_0^\infty
h(y,r)\,
J(u_0,\ldots,u_q)\,
\sinh^{(d-1)(q+1)}(r)\,\dd r\,
\sigma_y(\dd u_0)\cdots\sigma_y(\dd u_q)\,
\cH^d(\dd y).
\end{aligned}
\end{equation}

It remains only to evaluate the $(q+1)$-fold integral over $S_y\Hd$.  After identifying
\(T_y\Hd\) isometrically with \(\mathbb R^d\), we have
\(J(u_0,\ldots,u_q)=q!\Delta_q(u_0,\ldots,u_q)\), where
\(\Delta_q(u_0,\ldots,u_q)\) denotes the Euclidean \(q\)-dimensional volume of
the simplex with vertices \(u_0,\ldots,u_q\).  Hence, by the  spherical
simplex moment formula, see \cite[Theorem~8.2.3]{SW08} and
\cite[Theorem~4.12]{RandomSimplices},
\[
\int_{(S_y\Hd)^{q+1}}
J(u_0,\ldots,u_q)\,
\sigma_y(\dd u_0)\cdots\sigma_y(\dd u_q)
=
A_{d,q}.
\]
The left-hand side is independent of \(y\), by rotational invariance.  Combining
this identity with \eqref{eq:radial-incidence-directional} proves
\eqref{eq:radial-incidence-formula}.
\end{proof}

\section{Proof of Theorem \ref{thm:kVolDensity}}

Fix $d\geq 2$, $k\in\{0,1,\ldots,d-1\}$ and put \(q= d-k\). We write $\eta_{\lambda,\neq}^{q+1}$ for the collection of ordered $(q+1)$-tuples of points of the Poisson point process $\eta_\lambda$. For $\mathbf x=(x_0,\ldots,x_q)\in\eta_{\lambda,\neq}^{q+1}$ write
\begin{equation*}
L(\mathbf x)
=
\{y\in\Hd:\dist(y,x_0)=\dist(y,x_1)=\ldots=\dist(y,x_q)\},
\end{equation*}
and $\rho(y,\mathbf x)=\dist(y,x_0)$ for the common distance.
{We first work on the full-probability event on which the Poisson--Voronoi
tessellation is normal. Thus no point of \(\Hd\) is equidistant from more than
\(d+1\) nuclei, and, for \(q=d-k\), the relative interior of each \(k\)-face is
generated by a unique unordered \((q+1)\)-tuple of nuclei. More precisely, if
\(\mathbf x=(x_0,\ldots,x_q)\) is such a tuple, then the corresponding relative
interior consists of those points \(y\in L(\mathbf x)\) for which $\eta_\lambda\cap B_{\Hd}^{\circ}(y,\rho(y,\mathbf x))=\emptyset$.
The remaining points belong to lower-dimensional faces and therefore do not
contribute to the \(k\)-dimensional Hausdorff measure. Hence, for every bounded
Borel set \(W\subset\Hd\) with $0<\cH^d(W)<\infty$,}
\begin{equation}\label{eq:pathwise-counting}
\sum_{F\in X_k(\eta_\lambda)}\cH^k(F\cap W)
=
\frac{1}{(q+1)!}
\sum_{(x_0,\ldots,x_q)\in\eta_{\lambda,\neq}^{q+1}}
\int_{L(\mathbf x)\cap W}
\1\{\eta_\lambda\cap B_{\Hd}^\circ(y,\rho(y,\mathbf x))=\varnothing\}\,
\cH^k(\dd y),
\end{equation}
where the factor \((q+1)!\) removes the ordering of the same generating $(q+1)$-tuple of nuclei. Taking expectations and applying the multivariate Mecke equation in \cite[Corollary 3.2.3]{SW08} gives
$$
D_{d,k}(\lambda)
=
\frac{\lambda^{q+1}}{(q+1)!\,\cH^d(W)}
\int_{(\Hd)^{q+1}}
\int_{L(\mathbf x)\cap W}
\Prob\{\eta_\lambda\cap B_{\Hd}^\circ(y,\rho(y,\mathbf x))=\varnothing\}\,
\cH^k(\dd y)
\cH^{d(q+1)}(\dd\mathbf x).
$$
Next, we use the Poisson void probability together with the fact that the volume of a hyperbolic ball only depends on its radius. This gives
\begin{align}
D_{d,k}(\lambda)=
\frac{\lambda^{q+1}}{(q+1)!\,\cH^d(W)}
\int_{(\Hd)^{q+1}}
\int_{L(\mathbf x)\cap W}
\exp\{-\lambda b_d(\rho(y,\mathbf x))\}\,
\cH^k(\dd y)
\cH^{d(q+1)}(\dd\mathbf x).\label{eq:mecke}
\end{align}

We now apply Proposition~\ref{prop:radial-incidence} to the right-hand side of
\eqref{eq:mecke}.  Recall that, for
\(\mathbf x=(x_0,\ldots,x_q)\in(\Hd)^{q+1}\), the set \(L(\mathbf x)\) consists
of all points \(y\in\Hd\) having the same distance from
\(x_0,\ldots,x_q\).  
Define the non-negative measurable function
\[
h_\lambda:\Hd\times[0,\infty)\to[0,\infty],(y,r)\mapsto\mathbf 1_W(y)\exp\{-\lambda b_d(r)\}.
\]
Then the integral in \eqref{eq:mecke} can be written as
\[
\int_{(\Hd)^{q+1}}
\int_{L(\mathbf x)}
h_\lambda\bigl(y,\rho(y,\mathbf x)\bigr)\,
\cH^k(\dd y)\,
\cH^{d(q+1)}(\dd\mathbf x).
\]
By Proposition~\ref{prop:radial-incidence}, applied to \(h_\lambda\), we obtain
\[
\begin{aligned}
&\int_{(\Hd)^{q+1}}
\int_{L(\mathbf x)}
h_\lambda\bigl(y,\rho(y,\mathbf x)\bigr)\,
\cH^k(\dd y)\,
\cH^{d(q+1)}(\dd\mathbf x)
\\
& =
A_{d,q}
\int_{\Hd}
\int_0^\infty
h_\lambda(y,r)\,
\sinh^{(d-1)(q+1)}(r)\,\dd r\,
\cH^d(\dd y)
\\
& =
A_{d,q}
\int_{\Hd}
\mathbf 1_W(y)\,\cH^d(\dd y)
\int_0^\infty
\exp\{-\lambda b_d(r)\}
\sinh^{(d-1)(q+1)}(r)\,\dd r
\\
& =
A_{d,q}\,\cH^d(W)
\int_0^\infty
\exp\{-\lambda b_d(r)\}
\sinh^{(d-1)(q+1)}(r)\,\dd r,
\end{aligned}
\]
where we also used Fubini's theorem.
Consequently,
\begin{equation}\label{eq:Ddk-lambda-integral}
D_{d,k}(\lambda)
=
{A_{d,q}\lambda^{q+1}\over (q+1)!}
\int_0^\infty
\exp\{-\lambda b_d(r)\}
\sinh^{(d-1)(q+1)}(r)\,\dd r .
\end{equation}
This completes the proof of Theorem \ref{thm:kVolDensity}.\qed

\section{Proofs of the Corollaries for the IPVT}

\begin{proof}[Proof of Corollary \ref{cor:IPVTLimit}]
We identify the limit of \(D_{d,k}(\lambda)\) as
\(\lambda\downarrow0\).  We use the formula from Theorem \ref{thm:kVolDensity} and put {for ease of notation}
\[
I_\lambda
\coloneqq
\lambda^{q+1}
\int_0^\infty
\exp\{-\lambda b_d(r)\}
\sinh^{(d-1)(q+1)}(r)\,\dd r .
\]
By Remark~\ref{rem:Laplace},
\[
I_\lambda
=
{\lambda^{q+1}\over\omega_d}\,
\mathfrak L[\psi^q](\lambda),
\qquad
\psi(s)=\sinh^{d-1}\bigl(b_d^{-1}(s)\bigr).
\]
Moreover, the large-radius asymptotics for the volume of hyperbolic balls gives
\[
b_d(r)
=
\omega_d\int_0^r\sinh^{d-1}(t)\,\dd t
\sim
{\omega_d\over d-1}\sinh^{d-1}(r),
\qquad r\to\infty,
\]
and hence
\[
\psi(s)^q
\sim
\left({d-1\over\omega_d}\right)^q s^q,
\qquad s\to\infty,
\]
where the notation $\sim$ indicates that the ratio of the left- and the right-hand side tends to $1$. Thus \(\psi^q\) is regularly varying at infinity with index \(q\).
Karamata's Abelian theorem for Laplace transforms
\cite[Chapter XIII.5, Theorem 5]{FellerII} therefore yields
\[
\mathfrak L[\psi^q](\lambda)
\sim
\left({d-1\over\omega_d}\right)^q
\Gamma(q+1)\lambda^{-(q+1)},
\qquad \lambda\downarrow0.
\]
Consequently,
\[
I_\lambda
\longrightarrow
q!\,{(d-1)^q\over\omega_d^{q+1}},
\qquad \lambda\downarrow0 .
\]
Combining this with the representation of \(D_{d,k}(\lambda)\), we
obtain
\[
D_{d,k}
=
\lim_{\lambda\downarrow0}D_{d,k}(\lambda)
=
{A_{d,q}\over(q+1)!}\,
q!\,{(d-1)^q\over\omega_d^{q+1}}
=
{A_{d,q}\over q+1}\,
{(d-1)^q\over\omega_d^{q+1}}.
\]
This completes the proof.
\end{proof}

\begin{proof}[Proof of Corollary \ref{cor:TypicalkFace}]
By \cite[Proposition~4.2]{IPVT},
\[
\esp{\cH^k(F^{\rm typ}_{d,k})}
=
{D_{d,k}\over {\rm I}_{d,k}},
\]
, where we recall that ${\rm I}_{d,k}$ denotes the $k$-face counting density in $\IPVT{\Hd}$.
The statement follows now by combining the result of Corollary \ref{cor:IPVTLimit} with
\[
{\rm I}_{d,k}
=
{d+1\over q+1}\,{\mathsf{j}_{d,k}\over {\rm IDV}_d}
\]
from \cite[Theorem~4.7]{IPVT}.
\end{proof}

\begin{proof}[Proof of Corollary \ref{cor:LimKFaces}]
For a normal isometry-invariant tessellation of $\Hd$, the balance relation between the typical cell and the \(k\)-face volume intensity gives
\[
\EE\Bigg[
\sum_{F\in\mathcal F_k(Z_{d,\lambda}^{\rm typ})}
\cH^k(F)\Bigg]
=
{d+1-k\over \lambda}\,D_{d,k}(\lambda).
\]
Indeed, by normality every \(k\)-face is almost surely incident to exactly \(d+1-k\) full-dimensional cells, and the cell intensity is \(\lambda\), see \cite[Proposition 4.3]{IPVT}.  Multiplying by
\(\lambda\) and using \(D_{d,k}(\lambda)\to D_{d,k}\) as $\lambda\downarrow0$, yields the claim.
\end{proof}

\paragraph{Acknowledgements.} We would like to thank Pierre Calka for bringing the dissertation \cite{ChapronDiss2018} to our attention. M.D'A.~acknowledges support by the ANR project \textit{LOUCCOUM} (ANR-24-CE40-7809). CT was supported by the DFG via SPP 2265 \textit{Random Geometric Systems}. 
This work was initiated during the Mini-Workshop \textit{Hyperbolic Meets Stochastic Geometry} at the Mathematisches Forschungsinstitut Oberwolfach (MFO), see \cite{KT26OWR}. M.D'A.~is grateful to the Department of Mathematics of Ruhr University Bochum for excellent working conditions in the occasion of an invitation during which this work has been partly done. The authors used ChatGPT as a writing and discussion tool in preparing the manuscript. The mathematical content is the responsibility of the authors.

%\nocite{*}

\bibliographystyle{alpha}

\bibliography{biblio}

\end{document}